\newcommand{\vol}{\mathrm{vol}\,}
\theoremstyle{change}
\newtheorem{theo}{Theorem}[section]
\newtheorem*{theo*}{Theorem}
\newtheorem{coro}[theo]{Corollary}
\newtheorem{prop}[theo]{Proposition}
\newtheorem{rem}[theo]{Remark}
\begin{document}
	\title{On Hadwiger's covering functional for the simplex and the cross-polytope}
	
	\author{Fei Xue\footnote{Fei Xue: 05429@njnu.edu.cn\\School of Mathematical Sciences, Nanjing Normal University, No.1 Wenyuan Road Qixia District, Nanjing, P.R.China 210046}, Yanlu Lian\footnote{Yanlu Lian: yanlu\_lian@tju.edu.cn\\Center for Applied Mathematics, Tianjin University, Tianjin, P.R.China 300354},  Yuqin Zhang\footnote{Yuqin Zhang: yuqinzhang@tju.edu.cn\\School of Mathematics, Tianjin University, Tianjin, P.R.China 300072}}
	
	\maketitle
	
	
	\begin{abstract}
		In 1957, Hadwiger made a conjecture that every $n$-dimensional convex body can be covered by $2^n$ translations of its interior. The Hadwiger's covering functional $\gamma_m(K)$ is the smallest positive number $r$ such that $K$ can be covered by $m$ translations of $rK$. Due to Zong's program, we study the Hadwiger's covering functional for the simplex and the cross-polytope. In this paper, we give upper bounds for the Hadwiger's covering functional of the simplex and the cross-polytope.
	\end{abstract}

\section{Introduction}

Let $\mathcal{K}^n$ be the set of all convex bodies, that is, compact convex sets in the $n$-dimensional Euclidean space $\mathbb{E}^n$ with non-empty interior, and let $K$ be a convex body with boundary $\partial{K}$, interior $\mathrm{int}(K)$. If $\boldsymbol{a_1}, \boldsymbol{a_2},\dots, \boldsymbol{a_n}$ are $n$ linearly independent vectors in $\mathbb{E}^n$, then we call
$$\Lambda=\{\sum_{i=1}^nz_i\boldsymbol{a_i}:z_i\in\mathbb{Z}\}$$
a lattice. The set $\{\boldsymbol{a_1}, \boldsymbol{a_2},\dots, \boldsymbol{a_n}\}$ is called a basis of the lattice. Let $X$ be a discrete set of points in $\mathbb{E}^n$. We call $K+X$ a translative covering of $\mathbb{E}^n$ if
$$\bigcup_{x\in X}(K+x)=\mathbb{E}^n.$$
In addition, if $X$ is a lattice, we will call $K+X$ a lattice covering of $\mathbb{E}^n$. Let $c(K)$ denote the covering number of $K,$ i.e., the smallest number of translations of $\mathrm{int}(K)$ such that their union can cover $K$. In 1955, Levi~\cite{Levi} studied the covering number in dimension $2$. He proved that
 $$c(K)=
 \begin{cases}
 4,&\text{if K is a parallelogram},\\
 3,&\text{otherwise.}
 \end{cases}$$
 In 1957, Hadwiger~\cite{Hadwiger} studied this number and proposed the famous conjecture:

{\bf{Hadwiger's covering conjecture:}} For every $K\in \mathcal{K}^n$ we have
 $$c(K)\leq2^n,$$
 where the equality holds if and only if $K$ is a parallelopiped.

 Lassak~\cite{Lassak1} proved this conjecture for the three-dimensional centrally symmetric case. Rogers and Zong~\cite{Rogers} obtained the currently best known upper bound
 $$c(K)\leq \binom{2n}{n}(n\log n+n\log\log n+5n)$$
 for general $n$-dimensional convex bodies, and
 $$c(K)\leq2^n(n\log n+n\log\log n+5n)$$
 for centrally symmetric ones. Combining ideas from~\cite{Artstein} with a new result on the K{\"o}vner-Besicovitch measure of symmetry for convex bodies, Huang et al.~\cite{Huang} obtained a new upper bound for the covering number. That is, there exist universal constants $c_1, c_2 > 0$ such that for all $n\geq 2$ and every convex body $K \in \mathcal{K}^n$,
 $$c(K)\leq c_14^ne^{c_2\sqrt{n}}.$$
 For more details about several connections with other important problems such as the illumination problem and the separation problem, we refer to~\cite{Artstein,Bezdek,Boltyanski2,Brass}. Nevertheless, we are still far away from the solution of the conjecture, even for the three-dimensional case.

Let $\gamma_m(K)$ denote the smallest number $\rho$ such that $K$ can be covered by $m$ translations of $\rho K$, i.e.,
  $$\gamma_m(K)=\min\{\lambda>0: \exists\{\boldsymbol u_i:i=1,\dots,m\}\subset \mathbb{E}^n\text{, such that}\quad K\subseteq \cup_{i=1}^{m}(\lambda K+\boldsymbol u_i)\},$$
and define
$$\gamma_n=\max\limits_{K}\{\gamma_{2^n}(K)\}.$$

For some special cases, Lassak~\cite{Lassak2} showed that for every two-dimensional convex domain $K$,
 $$\gamma_4(K)\leq \frac{\sqrt{2}}{2}.$$
In 1998, Lassak~\cite{Lassak3} showed that for every three-dimensional convex body $K$,
$$\gamma_{24}(K)\leq \frac{7}{12}\sqrt{2}+\frac{1}{6}=0.9916\cdots.$$

Zong~\cite{Zong2,Zong4} conjectured that
$$\lim_{n \to \infty}\gamma_n=1,$$
and he furtherly proposed a four-step program to attack them. In particular, let $S_n$ be an $n$-dimensional simplex, and let $C_n^\star$ be an $n$-dimensional cross-polytope. Zong asked whether
$$\lim_{n \to \infty}\gamma_{2^n}(S_n)~\&~\lim_{n \to \infty}\gamma_{2^n}(C_n^\star)=1.$$

In this paper, we are going to show:

\begin{theo}\label{theo:1}
Let $S_n=\{(x_1,x_2,\dots,x_n): \sum_{i=1}^nx_i\leq 1, x_i\geq0, i=1,\dots,n\}$ be a simplex, then we have $\lim_{n \to \infty}\gamma_{2^n}(S_n)\leq \frac{1}{1+c_1}$, where $c_1=0.2938\cdots$ is the solution of $\frac{(1+c)^{(1+c)}}{c^c}=2$.
\end{theo}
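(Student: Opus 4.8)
The plan is to establish the stronger finitary statement that for every fixed $c\in(0,c_1)$ one has $\gamma_{2^n}(S_n)\le\frac{1}{1+c}$ for all sufficiently large $n$, and then let $c\uparrow c_1$. Writing $\lambda=\frac{1}{1+c}$ and rescaling by the factor $1+c$, this is equivalent to covering the dilate $(1+c)S_n=\{y\in\mathbb{E}^n: y_i\ge 0,\ \sum_{i=1}^n y_i\le 1+c\}$ by at most $2^n$ translates of $S_n$. The construction I would use is the explicit family indexed by lattice points of a scaled simplex: for each $v=(v_1,\dots,v_n)\in\mathbb{Z}_{\ge0}^n$ with $\sum_{i=1}^n v_i\le\lceil cn\rceil$, take the translate $S_n+\frac1n v$. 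There are exactly $\binom{\lceil cn\rceil+n}{n}$ such translates, and rescaling back expresses $S_n$ as a union of that many translates of $\frac{1}{1+c}S_n$ (padding with repeats if fewer than $2^n$ are available).

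The technical heart is verifying that this family covers $(1+c)S_n$. Given $y\in(1+c)S_n$, I would pick $v$ with $v_i\le\lfloor n y_i\rfloor$ for every $i$ and $\sum_i v_i=T$, where $T:=\max\{0,\ \lceil n\sum_i y_i-n\rceil\}$. Such a $v$ exists and lies in the index set because: (a) each fractional part is $<1$, so $\sum_i\lfloor n y_i\rfloor\ge n\sum_i y_i-n$ and hence $T\le\sum_i\lfloor n y_i\rfloor$, so the sum $T$ is attainable by lowering coordinates starting from $v_i=\lfloor n y_i\rfloor$; and (b) $n\sum_i y_i-n\le cn$ since $\sum_i y_i\le 1+c$, so $T\le\lceil cn\rceil$. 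Then $y_i\ge\frac{\lfloor n y_i\rfloor}{n}\ge\frac{v_i}{n}$ and $\sum_i y_i\le 1+\frac{T}{n}=1+\frac1n\sum_i v_i$, which are precisely the two inequalities equivalent to $y\in S_n+\frac1n v$. The one point requiring care in the design is the lattice fineness: with a coarser lattice the translates miss a neighbourhood of the centroid of the facet $\{\sum y_i=1+c\}$, whereas the lattice $\frac1n\mathbb{Z}^n$ (i.e.\ $\frac{\lambda}{n}\mathbb{Z}^n$ for $\lambda S_n$) just suffices, because a near-centroid point needs roughly $cn$ units of ``mass'' and the index set supplies exactly $\lceil cn\rceil$.

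It remains to count. Using $\binom{a+b}{a}\le\frac{(a+b)^{a+b}}{a^a b^b}$ (equivalently Stirling's formula), one gets $\binom{\lceil cn\rceil+n}{n}^{1/n}\to\frac{(1+c)^{1+c}}{c^c}$ as $n\to\infty$. Since $c\mapsto\frac{(1+c)^{1+c}}{c^c}$ has logarithmic derivative $\log\frac{1+c}{c}>0$, it is strictly increasing, so $c<c_1$ together with the defining equation $\frac{(1+c_1)^{1+c_1}}{c_1^{c_1}}=2$ gives $\frac{(1+c)^{1+c}}{c^c}<2$; therefore $\binom{\lceil cn\rceil+n}{n}\le 2^n$ for all large $n$. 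Consequently $\gamma_{2^n}(S_n)\le\frac{1}{1+c}$ for all large $n$, and letting $c\uparrow c_1$ yields $\limsup_{n\to\infty}\gamma_{2^n}(S_n)\le\frac{1}{1+c_1}$, which is the asserted bound. The main obstacle is genuinely just identifying the right construction (the family of translates and, within it, the lattice spacing); once that is in place, both the coverage verification and the binomial estimate are routine.
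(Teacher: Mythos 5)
Your proof is correct and takes essentially the same route as the paper's: covering $(1+c)S_n$ by the $\binom{n+\lceil cn\rceil}{n}$ translates of $S_n$ indexed by nonnegative lattice points of bounded coordinate-sum (this is the paper's Proposition~\ref{prop:simplex}, written for $\overline{S_n}=nS_n$ and $M_1(n,k)$, proved there by induction on $k$ rather than your direct floor-function construction), followed by the same Stirling estimate for $\binom{n+\lceil cn\rceil}{n}^{1/n}$. The only meaningful difference is that you are a bit more careful at the limiting step, fixing $c<c_1$ and explicitly using the monotonicity of $c\mapsto(1+c)^{1+c}/c^c$ before letting $c\uparrow c_1$, where the paper simply asserts $k(n)/n\to c_1$ from the pointwise Stirling limit.
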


\begin{theo}\label{theo:2}
Let $C_n^\star=\{(x_1,x_2,\dots,x_n): \sum_{i=1}^n|x_i|\leq 1, i=1,\dots,n\}$ be a cross-polytope,
then we have $\lim_{n \to \infty}\gamma_{2^n}(S_n)\leq \frac{1}{1+c_2}$, where $c_3\leq c_2\leq c_4$, $c_3=0.2056\cdots$ is the solution of $\frac{1}{2^{1-c}}\cdot \frac{(1+c)^{(1+c)}}{c^c}=1$, and $c_4=0.2271\cdots$ is the solution of $\frac{1}{2^{1-c}}\cdot \frac{1}{(c^c)\cdot (1+c)^{(1+c)}}=1$.
\end{theo}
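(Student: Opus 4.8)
The plan is to rephrase the statement as a covering of the $\ell_{1}$-ball by $\ell_{1}$-balls and then to realize that covering by an explicit grid, using the same rounding idea that proves Theorem~\ref{theo:1}. Since $C_{n}^{\star}$ is the unit ball of $\|\cdot\|_{1}$, a translate $\rho C_{n}^{\star}+\boldsymbol u$ is precisely $\{\boldsymbol x:\|\boldsymbol x-\boldsymbol u\|_{1}\le\rho\}$, so $\gamma_{m}(C_{n}^{\star})$ is the least $\rho$ for which $C_{n}^{\star}$ can be covered by $m$ closed $\ell_{1}$-balls of radius $\rho$ with arbitrary centres. I therefore want, for a parameter $c\in(0,1)$ to be optimized at the end, a family of at most $2^{n}$ centres $\boldsymbol u$ with $C_{n}^{\star}\subseteq\bigcup_{\boldsymbol u}\bigl(\tfrac{1}{1+c}C_{n}^{\star}+\boldsymbol u\bigr)$.

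The construction I would use is this: put $\delta=\tfrac{1}{n(1+c)}$ and take as centres all points $\delta\boldsymbol v$ with $\boldsymbol v\in\mathbb Z^{n}$ and $\|\boldsymbol v\|_{1}\le\lceil nc\rceil$. To see that these cover $C_{n}^{\star}$, fix $\boldsymbol x$ with $\sum_{i}|x_{i}|\le1$ and look for $\boldsymbol v$ of the form $v_{i}=\mathrm{sign}(x_{i})\,k_{i}$ with each $k_{i}\in\{0,1,\dots,\lfloor|x_{i}|/\delta\rfloor\}$; for such $\boldsymbol v$ one has $\|\boldsymbol x-\delta\boldsymbol v\|_{1}=\sum_{i}\bigl(|x_{i}|-\delta k_{i}\bigr)=\sum_{i}|x_{i}|-\delta\sum_{i}k_{i}$, so it suffices to choose the integers $k_{i}$ so that $\sum_{i}k_{i}$ is at least $n(1+c)\sum_{i}|x_{i}|-n$ (this makes the last expression $\le\tfrac{1}{1+c}$) and at most $\lceil nc\rceil$. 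These two demands are compatible because $\sum_{i}|x_{i}|\le1$ forces $n(1+c)\sum_{i}|x_{i}|-n\le nc$, and the lower one is reachable because $\sum_{i}\lfloor|x_{i}|/\delta\rfloor\ge\tfrac{1}{\delta}\sum_{i}|x_{i}|-n=n(1+c)\sum_{i}|x_{i}|-n$; distributing any admissible total among the coordinates while respecting $k_{i}\le\lfloor|x_{i}|/\delta\rfloor$ produces the desired $\boldsymbol v$. This is nothing but the grid-rounding covering of the simplex behind Theorem~\ref{theo:1} carried out in the orthant of $\boldsymbol x$; the orthant enters only through the signs, and the coordinates with $k_{i}=0$ give sign-neutral centres, which is exactly why one centre can serve many orthants at once.

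Next I would count and estimate. The number of centres is $L(n,m):=\#\{\boldsymbol v\in\mathbb Z^{n}:\|\boldsymbol v\|_{1}\le m\}=\sum_{i\ge0}2^{i}\binom{n}{i}\binom{m}{i}$ with $m=\lceil nc\rceil$, so the construction already gives $\gamma_{2^{n}}(C_{n}^{\star})\le\tfrac{1}{1+c}$ whenever $L(n,\lceil nc\rceil)\le2^{n}$; passing to $\limsup_{n}$, the critical value is the $c_{2}$ with $\lim_{n}L(n,\lceil nc_{2}\rceil)^{1/n}=2$, and then $\limsup_{n}\gamma_{2^{n}}(C_{n}^{\star})\le\tfrac{1}{1+c_{2}}$. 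For $c_{2}\ge c_{3}$ I would estimate crudely: $2^{i}\le2^{m}$ together with the Vandermonde identity $\sum_{i}\binom{n}{i}\binom{m}{i}=\binom{n+m}{m}$ gives $L(n,nc)\le2^{nc}\binom{n(1+c)}{nc}\le\bigl(2^{c}(1+c)^{1+c}c^{-c}\bigr)^{n}$, which is $\le2^{n}$ exactly when $\tfrac{(1+c)^{1+c}}{c^{c}}\le2^{1-c}$, i.e. for all $c\le c_{3}$. For $c_{2}\le c_{4}$ I would keep only the single summand $i=m$: $L(n,nc)\ge2^{nc}\binom{n}{nc}\ge\tfrac{1}{n+1}\bigl(2^{c}c^{-c}(1-c)^{-(1-c)}\bigr)^{n}$, which exceeds $2^{n}$ for large $n$ once $c>c_{4}$, so the construction cannot do better than $c_{4}$; hence $c_{3}\le c_{2}\le c_{4}$. (Replacing $nc$ by $\lceil nc\rceil$ throughout changes $L$ only by a polynomial factor, which disappears in the limit.)

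The Stirling bounds $\binom{n+m}{m}\le\bigl((1+c)^{1+c}c^{-c}\bigr)^{n}$ and $\binom{n}{nc}\ge\tfrac{1}{n+1}\bigl(c^{-c}(1-c)^{-(1-c)}\bigr)^{n}$, and the monotonicity checks identifying $c_{3},c_{4}$ with the displayed transcendental equations, are routine. I expect the only genuinely delicate point to be the rounding step of the second paragraph: one must be sure that an integer value of $\sum_{i}k_{i}$ really exists in the window pinched from below by the covering radius and from above by the cardinality budget $\lceil nc\rceil$, and that this value can then be split among the coordinates without any $k_{i}$ exceeding $\lfloor|x_{i}|/\delta\rfloor$ — the same near-tight balancing one meets for the simplex in Theorem~\ref{theo:1}, here with the sign pattern as an extra, cost-free degree of freedom. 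Everything else is bookkeeping, and the simplex ingredient is reproved in passing (at the general parameter $c$, not only at the optimal $c_{1}$), so the argument does not rely on Theorem~\ref{theo:1} as a black box.
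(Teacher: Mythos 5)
Your proposal is correct and follows essentially the same route as the paper: cover $C_n^\star$ by integer translates of a shrunken cross-polytope, count the lattice points in the $\ell_1$-ball, sandwich that count between $2^k\binom{n}{k}$ and $2^k\binom{n+k}{k}$, and apply Stirling to identify $c_3$ and $c_4$; your greedy-rounding proof of the covering is just a direct rephrasing of the paper's induction in Proposition~\ref{prop:cross}. One small bonus: your defining equation for $c_4$, namely $2^{1-c}c^{c}(1-c)^{1-c}=1$, is the correct one coming from Stirling applied to $\binom{n}{\lceil cn\rceil}$ — the paper's displayed equation with $(1+c)^{1+c}$ in place of $(1-c)^{1-c}$ is a typo (the numerical value $0.2271\cdots$ matches your equation, not the paper's).
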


Let $K_n^{p}$ be the unit ball of the $n$-dimensional $l_p$ norm, in other words,
$$K_n^{p}=\{(x_1,x_2,\dots,x_n):\mid x_1\mid^p+\mid x_2\mid^p+\cdots+ \mid x_n \mid^p\leq 1\}.$$
Actually, when $p=1$, $K_n^{1}=C_n^\star$.
Let
$$K_n^{p*}=\{(x_1,x_2,\dots,x_n):x_i\geq 0, x_1^p+ x_2^p+\cdots+ x_n^p \leq 1\}.$$
Zong~\cite{Zong2} obtained
$$\gamma_8(C)\leq \frac{2}{3}$$
for a bounded three-dimensional convex cone $C$, and
$$\gamma_8(K_3^{p})\leq \sqrt{\frac{2}{3}}$$
for all the unit ball $K_3^{p}$ of the three-dimensional $l_p$ spaces.
Lian and Zhang~\cite{Lian} obtained a series of exact values of $\gamma_m(K_3^{1})$, when $m$ is some positive integer numbers.
In this paper, we will furtherly show that:

\begin{theo}\label{theo:3}
For each $p\geq 1$, $\lim_{n \to \infty}\gamma_{2^n}(K_n^{p*})\leq \left(\frac{1}{1+c_1}\right)^{\frac{1}{p}}$, where $c_1=0.2938\cdots$ is the solution $\frac{(1+c)^{1+c}}{c^c}=2.$	
\end{theo}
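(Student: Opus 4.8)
The plan is to derive Theorem~\ref{theo:3} from Theorem~\ref{theo:1} by a coordinatewise power substitution, exploiting that $K_n^{1*}$ \emph{is} the simplex $S_n$ and that $K_n^{p*}$ sits over $S_n$ via the map raising each coordinate to the $p$-th power. Concretely, for $p\ge 1$ let $\Phi\colon \mathbb{R}_{\ge 0}^n\to\mathbb{R}_{\ge 0}^n$ be $\Phi(x_1,\dots,x_n)=(x_1^p,\dots,x_n^p)$. Since $t\mapsto t^p$ is an increasing bijection of $[0,\infty)$, the map $\Phi$ is a bijection of the nonnegative orthant with inverse $(y_1,\dots,y_n)\mapsto(y_1^{1/p},\dots,y_n^{1/p})$, and it carries $K_n^{p*}$ onto $S_n$ because $\sum_i x_i^p\le 1$ exactly when $\sum_i\Phi(x)_i\le 1$. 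Hence a covering of $S_n$ pulls back under $\Phi$ to a covering of $K_n^{p*}$, and everything comes down to identifying what the pulled-back pieces look like.

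The workhorse is the elementary superadditivity of $t\mapsto t^p$ for $p\ge 1$: $(a+b)^p\ge a^p+b^p$ for $a,b\ge 0$, equivalently $(x-u)^p\le x^p-u^p$ whenever $0\le u\le x$. First I would note that in any covering $S_n\subseteq\bigcup_k(\boldsymbol v_k+\lambda S_n)$ one may assume every $\boldsymbol v_k$ has nonnegative coordinates: replacing $\boldsymbol v_k$ by $\boldsymbol v_k^+=(\max(v_{k,1},0),\dots,\max(v_{k,n},0))$ only enlarges the part of $\boldsymbol v_k+\lambda S_n$ lying in the orthant, since $y\ge 0$ and $y\ge\boldsymbol v_k$ already force $y\ge\boldsymbol v_k^+$, while $\sum_i(y_i-v_{k,i}^+)\le\sum_i(y_i-v_{k,i})\le\lambda$. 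Now, with $\boldsymbol v_k\ge 0$, set $\boldsymbol u_k=((v_{k,1})^{1/p},\dots,(v_{k,n})^{1/p})$. If $x\in\Phi^{-1}(\boldsymbol v_k+\lambda S_n)$ then $x_i^p\ge v_{k,i}$ (hence $x_i\ge u_{k,i}$) and $\sum_i(x_i^p-v_{k,i})\le\lambda$; applying superadditivity in each coordinate gives $\sum_i(x_i-u_{k,i})^p\le\sum_i(x_i^p-u_{k,i}^p)=\sum_i(x_i^p-v_{k,i})\le\lambda$, that is, $x\in\boldsymbol u_k+\lambda^{1/p}K_n^{p*}$. Therefore $\Phi^{-1}(\boldsymbol v_k+\lambda S_n)\subseteq\boldsymbol u_k+\lambda^{1/p}K_n^{p*}$.

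Putting these together, any covering of $S_n$ by $m$ translates of $\lambda S_n$ produces a covering of $K_n^{p*}$ by $m$ translates of $\lambda^{1/p}K_n^{p*}$, so $\gamma_m(K_n^{p*})\le\gamma_m(S_n)^{1/p}$ for all $m$ and all $p\ge 1$. Taking $m=2^n$ and passing to the limit, Theorem~\ref{theo:1} yields $\limsup_{n\to\infty}\gamma_{2^n}(K_n^{p*})\le\left(\frac{1}{1+c_1}\right)^{1/p}$, which is the assertion.

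No step here is a serious obstacle once the substitution is chosen; the points that need care are getting the direction of the superadditivity estimate right (we need $(x-u)^p\le x^p-u^p$, and it is here that $p\ge 1$ enters) and the reduction to nonnegative translation vectors, which is exactly what makes the coordinatewise $p$-th root $\boldsymbol u_k$ meaningful. It is worth recording that this transfer is lossy for large $p$: as $p\to\infty$ the body $K_n^{p*}$ tends to $[0,1]^n$, for which $\gamma_{2^n}=\frac{1}{2}$, whereas our bound tends to $1$; improving Theorem~\ref{theo:3} for large $p$ would require a construction adapted to $K_n^{p*}$ itself rather than a reduction to the simplex.
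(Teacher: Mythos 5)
Your proof is correct, and it takes a genuinely different route from the paper. The paper proves Proposition~\ref{prop:simplex2} directly: it reuses the lattice point set $M_1(n,k)$ from the simplex case and shows by an induction on $k$ that $\overline{K_n^{p*}}+M_1(n,k)\supseteq t_{n,p,k}\overline{K_n^{p*}}$, where $t_{n,p,k}$ is defined by the recursion $(t_{n,p,k+1}-1)^p+(n-1)t_{n,p,k+1}^p=n\,t_{n,p,k}^p$, and then verifies $t_{n,p,k}^p\ge (n+k)/n$. You instead transfer a simplex covering to $K_n^{p*}$ via the coordinatewise power map $\Phi(x)=(x_1^p,\dots,x_n^p)$, first reducing to nonnegative translation vectors and then using superadditivity of $t\mapsto t^p$ to show $\Phi^{-1}(\boldsymbol v_k+\lambda S_n)\subseteq \boldsymbol u_k+\lambda^{1/p}K_n^{p*}$, which yields the clean transfer inequality $\gamma_m(K_n^{p*})\le\gamma_m(S_n)^{1/p}$ for \emph{all} $m$ and $p\ge1$; Theorem~\ref{theo:1} then finishes. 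The analytic kernel is the same in both arguments ($(a-b)^p\le a^p-b^p$ for $0\le b\le a$, $p\ge1$), but your reduction is shorter, avoids the recursion for $t_{n,p,k}$, and produces a reusable comparison principle between $\gamma_m(S_n)$ and $\gamma_m(K_n^{p*})$; the paper's approach, by contrast, gives an explicit lattice-structured covering of $K_n^{p*}$, which your nonlinear pullback does not (the translate centers $\boldsymbol u_k=\Phi^{-1}(\boldsymbol v_k)$ are not a lattice). Your closing remark that the transfer is lossy as $p\to\infty$ (the bound tends to $1$ while $\gamma_{2^n}([0,1]^n)=\tfrac12$) is an accurate observation and applies equally to the paper's bound, since both give the same estimate.
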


\begin{theo}\label{theo:4}
For each $p\geq 1$, $\lim_{n \to \infty}\gamma_{2^n}(K_n^{p})\leq \left(\frac{1}{1+c_2}\right)^{\frac{1}{p}}$, where $c_3\leq c_2\leq c_4$. And $c_3=0.2056\cdots$ is the solution of $\frac{1}{2^{1-c}}\cdot \frac{(1+c)^{(1+c)}}{c^c}=1$, $c_4=0.2271\cdots$ is the solution of $\frac{1}{2^{1-c}}\cdot \frac{1}{(c^c)\cdot (1+c)^{(1+c)}}=1$.
\end{theo}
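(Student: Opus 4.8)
The plan is to derive Theorem~\ref{theo:4} from the construction behind Theorem~\ref{theo:3} (and ultimately behind Theorem~\ref{theo:1}) in exactly the way Theorem~\ref{theo:2} is derived from Theorem~\ref{theo:1}: transfer a covering of the positive orthant $K_{n}^{p*}$ to a covering of all $2^{n}$ orthants of $K_{n}^{p}$, exploiting that coordinate signs and magnitudes behave well under the map $t\mapsto\mathrm{sgn}(t)|t|^{p}$. Concretely, I would fix $k=k_{n}\le n$, set $N=n+k$, and use the ``rounding'' statement extracted from the proof of Theorem~\ref{theo:3}: for every $y\in K_{n}^{p*}$ there is $a\in\mathbb{Z}_{\ge 0}^{n}$ with $\sum_{i}a_{i}\le k$, $(a_{i}/N)^{1/p}\le y_{i}$ for all $i$, and $\sum_{i}\big(y_{i}-(a_{i}/N)^{1/p}\big)^{p}\le n/N$ --- equivalently, $y$ lies in the translate $(n/N)^{1/p}K_{n}^{p*}+\big((a_{1}/N)^{1/p},\dots,(a_{n}/N)^{1/p}\big)$ --- the point being the elementary inequality $(\alpha-\beta)^{p}\le\alpha^{p}-\beta^{p}$, valid for $p\ge 1$ and $\alpha\ge\beta\ge 0$.

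Next I would build the covering of $K_{n}^{p}$. Given $x\in K_{n}^{p}$, choose $\epsilon\in\{-1,1\}^{n}$ with $\epsilon_{i}x_{i}\ge 0$ for every $i$, so $|x|=(|x_{1}|,\dots,|x_{n}|)\in K_{n}^{p*}$; apply the rounding statement to $y=|x|$ and put $v_{i}=\epsilon_{i}(a_{i}/N)^{1/p}$. For each $i$, $x_{i}$ and $v_{i}$ are on the same side of $0$ and $|v_{i}|\le|x_{i}|$, so $|x_{i}-v_{i}|=|x_{i}|-|v_{i}|$ and hence $\sum_{i}|x_{i}-v_{i}|^{p}=\sum_{i}\big(|x_{i}|-(a_{i}/N)^{1/p}\big)^{p}\le n/N$, i.e. $x\in(n/N)^{1/p}K_{n}^{p}+v$. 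As $x$ ranges over $K_{n}^{p}$, the vectors $v$ obtained lie in $V=\big\{\big(\mathrm{sgn}(b_{i})(|b_{i}|/N)^{1/p}\big)_{i}:b\in\mathbb{Z}^{n},\,\sum_{i}|b_{i}|\le k\big\}$, and $b\mapsto\big(\mathrm{sgn}(b_{i})(|b_{i}|/N)^{1/p}\big)_{i}$ is a bijection of $\{b\in\mathbb{Z}^{n}:\sum_{i}|b_{i}|\le k\}$ onto $V$. Therefore $K_{n}^{p}$ is covered by $|V|=\#\big(kC_{n}^{\star}\cap\mathbb{Z}^{n}\big)=\sum_{j=0}^{\min(n,k)}\binom{n}{j}\binom{k}{j}2^{j}$ translates of $(n/N)^{1/p}K_{n}^{p}$, which already gives $\gamma_{\#(kC_{n}^{\star}\cap\mathbb{Z}^{n})}(K_{n}^{p})\le(n/N)^{1/p}=\big(1/(1+k/n)\big)^{1/p}$.

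The remaining task is to choose $k_{n}$ with $k_{n}/n\to c_{2}$ so that $\#(k_{n}C_{n}^{\star}\cap\mathbb{Z}^{n})\le 2^{n}$ for all large $n$, and then let $n\to\infty$. This counting step is identical to the one in the proof of Theorem~\ref{theo:2}, so the same constants appear: Vandermonde's identity gives $\sum_{j}\binom{n}{j}\binom{k}{j}2^{j}\le 2^{k}\binom{n+k}{n}$, and $\binom{n+k}{n}\le(n+k)^{n+k}/(n^{n}k^{k})$ turns the right side into $\big(2^{\kappa}(1+\kappa)^{1+\kappa}/\kappa^{\kappa}\big)^{n}$ with $\kappa=k/n$, which is $\le 2^{n}$ precisely when $\kappa\le c_{3}$; conversely, a single near-maximal term $\binom{n}{j}\binom{k}{j}2^{j}$ of the sum already exceeds $2^{n}$ once $\kappa>c_{4}$, so this construction cannot do better. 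Hence the smallest ratio $1/(1+k/n)$ it achieves tends to $1/(1+c_{2})$ with $c_{3}\le c_{2}\le c_{4}$, and taking $p$-th roots --- harmless, since the construction only rescales the $C_{n}^{\star}$-ratio by a $p$-th root and does not change the number of pieces --- yields $\lim_{n\to\infty}\gamma_{2^{n}}(K_{n}^{p})\le\big(1/(1+c_{2})\big)^{1/p}$.

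The main obstacle is the asymptotics of $\#(kC_{n}^{\star}\cap\mathbb{Z}^{n})=[y^{k}]\,(1+y)^{n}(2+y)^{k}$: the exact threshold $c_{2}$ at which this count first catches up with $2^{n}$ does not seem to have a closed form, which is why the bound can only be stated through the two explicit transcendental constants $c_{3}$ and $c_{4}$ bracketing it. Everything else is soft --- the inequality $(\alpha-\beta)^{p}\le\alpha^{p}-\beta^{p}$ carries the whole transfer from $S_{n}$ and $C_{n}^{\star}$ to $K_{n}^{p}$, producing the $p$-th root and leaving the combinatorics, hence $c_{2},c_{3},c_{4}$, untouched.
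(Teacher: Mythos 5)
Your argument is correct and reaches the paper's bound, but the covering construction behind it is genuinely different. The paper normalizes to $\overline{K_n^p}=n^{1/p}K_n^p$, translates by the integer points $M_2(n,k)$, and proves $\overline{K_n^p}+M_2(n,k)\supseteq\left(\frac{n+k}{n}\right)^{1/p}\overline{K_n^p}$ by the same inductive scheme as Proposition~\ref{prop:simplex2}: a recursively defined sequence $t_{n,p,k}$, knocking one unit off the largest coordinate at each step. You instead keep the unit ball, place the translates on the \emph{curved} grid $\bigl(\mathrm{sgn}(b_i)(|b_i|/N)^{1/p}\bigr)_i$ with $b\in M_2(n,k)$ and $N=n+k$, and control the residual in one shot via the elementary inequality $(\alpha-\beta)^p\le\alpha^p-\beta^p$ for $\alpha\ge\beta\ge0$, $p\ge1$; after rescaling the paper's translation points would read $\bigl(b_i/N^{1/p}\bigr)_i$, so the two families of centers really are different. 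Both constructions produce exactly $\#M_2(n,k)$ translates of $\left(\frac{n}{n+k}\right)^{1/p}K_n^p$, so the counting step --- choosing $k(n)$ with $\#M_2(n,k)\le 2^n$ and squeezing $k(n)/n$ between $c_3$ and $c_4$ via $\binom{n}{k}2^k\le\#M_2(n,k)\le 2^k\binom{n+k}{n}$ --- is identical. Your closed form $\#M_2(n,k)=\sum_j\binom{n}{j}\binom{k}{j}2^j$ differs superficially from the paper's $\sum_j\binom{n}{j}\binom{k+j}{n}$; they agree, and yours makes both the Vandermonde upper bound and the $j=k$ lower bound visible at a glance. Overall, your direct rounding is cleaner and makes explicit exactly where the final $p$-th root comes from, while the paper's route has the cosmetic advantage of reusing literally the same translation set $M_2(n,k)$ as for the crosspolytope, so that Propositions~\ref{prop:cross} and~\ref{prop:cross2} are a single picture.
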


This paper is organized as follows. In Section 2, we provide a fundamental idea for finite coverings of the simplex and the cross-polytope, and give a short introduction to the lattice covering density. In Section 3, we deal with the Hadwiger's covering functionals of the simplex and the cross-polytope by an easiest lattice covering method. In Section 4, we generalize the finite coverings of the simplex and the cross-polytope to the quarter-$l_p$ ball and $l_p$ ball. Within the same way, we give upper bounds of Hadwiger's covering functionals of the quarter-$l_p$ ball and $l_p$ ball.

\section{The easiest lattice coverings of the simplex and the cross-polytope}

For convenience, we let $\overline{S_n}=\{(x_1,x_2,\dots,x_n): \sum_{i=1}^nx_i\leq n,x_i\geq0,i=1,\dots,n\}$ be a normalized simplex, and $\overline{C_n^\star}=\{(x_1,x_2,\dots,x_n):\mid x_1\mid+\mid x_2\mid+\dots+ \mid x_n \mid\leq n\}$ be a normalized cross-polytope. Both lattice coverings are not optimal, i.e., the corresponding covering densities of $\mathbb{E}^n$ are not the best. For more details we refer to Section~\ref{sect:remark}.
However, they provide a fundamental idea for finite coverings of the simplex and the cross-polytope.

\subsection{Simplex covering}

Let $M_1(n,k)=\{(x_1,x_2,\dots,x_n)\in \mathbb{Z}^n:\sum_{i=1}^nx_i\leq k, x_i\geq0,i=1,\dots,n\}.$

\begin{prop}\label{prop:simplex}
	$\overline{S_n}+M_1(n,k)=\frac{n+k}{n}\overline{S_n}$. Moreover,
	$$\#M_1(n,k)=\binom{n+k}{n}.$$
\end{prop}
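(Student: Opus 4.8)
The plan is to establish the set-theoretic identity $\overline{S_n}+M_1(n,k)=\frac{n+k}{n}\overline{S_n}$ by double inclusion, and then to read off the cardinality of $M_1(n,k)$ by a standard stars-and-bars count. Write $\Sigma(x)=\sum_{i=1}^n x_i$ for the coordinate sum. First I would observe that $\overline{S_n}=\{x\ge 0:\Sigma(x)\le n\}$, so $\frac{n+k}{n}\overline{S_n}=\{x\ge 0:\Sigma(x)\le n+k\}$. For the inclusion $\overline{S_n}+M_1(n,k)\subseteq \frac{n+k}{n}\overline{S_n}$, take $s\in\overline{S_n}$ and $m\in M_1(n,k)$; then $s+m\ge 0$ componentwise and $\Sigma(s+m)=\Sigma(s)+\Sigma(m)\le n+k$, which is exactly the defining inequality of the dilate.

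The reverse inclusion is the substantive direction: given $x\ge 0$ with $\Sigma(x)\le n+k$, I must exhibit a lattice point $m\in M_1(n,k)$ and a point $s\in\overline{S_n}$ with $x=s+m$. The natural choice is to push each coordinate down by an integer amount, distributing a total ``budget'' of at most $k$ units among the coordinates so that the residual vector $s=x-m$ lands in $\overline{S_n}$, i.e. has nonnegative entries summing to at most $n$. Concretely, if $\Sigma(x)\le n$ one simply takes $m=0$; otherwise one needs to remove $\lceil \Sigma(x)-n\rceil \le k$ units of mass, and I would do this greedily — repeatedly decrementing (by $1$) a coordinate of $x$ that is currently $\ge 1$ — which is always possible because as long as $\Sigma(x-(\text{current }m))>n\ge 1$ some coordinate exceeds $1$. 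One checks that after at most $k$ such steps the residual has coordinate sum $\le n$ (in fact in $(n-1,n]$ at the stopping time) and all coordinates stay $\ge 0$, so $m\in\mathbb Z^n_{\ge 0}$ with $\Sigma(m)\le k$ and $s=x-m\in\overline{S_n}$. This greedy rounding argument — verifying that nonnegativity is preserved and that the sum can always be brought down to at most $n$ using an integer decrement at each stage — is the one place where a little care is needed, though it is elementary; it is essentially the statement that $\lfloor\,\cdot\,\rfloor$-type roundings of a point with bounded coordinate sum stay inside the smaller simplex.

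For the count, $\#M_1(n,k)$ is the number of nonnegative integer solutions of $x_1+\dots+x_n\le k$. Introducing a slack variable $x_{n+1}=k-\sum_{i=1}^n x_i\ge 0$ turns this into the number of nonnegative integer solutions of $x_1+\dots+x_{n+1}=k$, which by the standard stars-and-bars formula equals $\binom{n+k}{n}$. I do not anticipate any real obstacle here; the only mildly delicate point in the whole proposition is making the greedy decrement in the reverse inclusion airtight, and even that reduces to the trivial observation that a sum of nonnegative reals exceeding $n\ge 1$ forces some summand to be at least $1$.
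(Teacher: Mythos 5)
Your proposal is correct and follows essentially the same approach as the paper: the easy forward inclusion, then for the reverse inclusion a decomposition of a point of the dilate into an integer vector of coordinate-sum at most $k$ plus a residual in $\overline{S_n}$, followed by stars-and-bars for the count. The only cosmetic difference is that the paper packages the reverse inclusion as an induction on $k$, writing $y_i=[y_i]+\{y_i\}$ and choosing integers $0\le z_i\le[y_i]$ with $\sum z_i=k$ in one shot, whereas you perform the same reduction greedily one unit at a time; both hinge on the identical pigeonhole observation that $n$ nonnegative reals with sum exceeding $n$ must include one that is at least $1$.
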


\begin{proof}
Since $\overline{S_n}=\{(x_1,x_2,\dots,x_n): \sum_{i=1}^nx_i\leq n,x_i\geq0,i=1,\dots,n\}$, then,
$$\frac{n+k}{n}\overline{S_n}=\{(x_1,x_2,\dots,x_n): \sum_{i=1}^nx_i\leq n+k, x_i\geq0,i=1,\dots,n\}.$$
It is easy to see $\overline{S_n}+M_1(n,k)\subseteq \frac{n+k}{n}\overline{S_n}$.
On the other side, for $k=0$, $\overline{S_n}+M_1(n,0)=\overline{S_n}$. By induction, we assume that
$$\overline{S_n}+M_1(n,k-1)\supseteq \frac{n+k-1}{n}\overline{S_n}.$$
Then for any $(y_1,y_2,\dots,y_n)\in \frac{n+k}{n}\overline{S_n}$, we may assume that
$$k+n-1<y_1+y_2+ \dots +y_n\leq k+n, \eqno(1a)$$
otherwise it will be contained in $\frac{n+k-1}{n}\overline{S_n}.$ Now let
$$y_i=[y_i]+\{y_i\},\eqno(1b)$$
where $[\cdot]$ is the Gauss function, and $0\leq\{y_i\}<1$ for all $i=1,2,\dots,n.$ That is,
$$k+n-1<[y_1]+\{y_1\}+[y_2]+\{y_2\}+\dots+[y_n]+\{y_n\}\leq k+n.$$
Apparently, we have $\{y_1\}+\{y_2\}+\dots+\{y_n\}\leq n,$ thus
$$k-1<[y_1]+[y_2]+\cdots+[y_n].$$
Since $[y_1]+[y_2]+\cdots+[y_n]$ is an integer, we have
$$k\leq[y_1]+[y_2]+\cdots+[y_n].$$
Consider integers $0\leq z_i\leq [y_i]$, $i=1,\cdots,n$, with $k=z_1+\cdots+z_n$. In this case,
$$(y_1,\cdots,y_n)=(z_1,\cdots,z_n)+(y_1-z_1,\cdots,y_n-z_n),$$
where $(z_1,\cdots,z_n)\in M_1(n,k)$, $y_i-z_i\geq 0$, and $\sum_{i=1}^{n}(y_i-z_i)\leq n$.

Therefore,
 $$\frac{n+k}{n}\overline{S_n}=\overline{S_n}+M_1(n,k).$$

Moreover,
\begin{align*}
\#M_1(n,k):=m_1(n,k)&=\#\{(x_1,x_2,\dots,x_n)\in \mathbb{Z}^n:x_i\geq0, \sum_{i=1}^n x_i\leq k\}\\
&=\#\{(x_1,x_2,\dots,x_n)\in \mathbb{Z}^n:x_i\geq0, \sum_{i=1}^n x_i+t=k, t\geq 0, t\in \mathbb{Z}\}\\
&=\binom{n+k}{n}.
\end{align*}
\end{proof}

\begin{coro}
$\gamma_{m_1(n,k)}(S_n)\leq \frac{n}{n+k}$.
\end{coro}

\subsection{Cross-polytope covering}

Let $M_2(n,k)=\{(x_1,x_2,\dots,x_n)\in \mathbb{Z}^n:\sum_{i=1}^n\mid x_i\mid\leq k, i=1,\dots,n\}.$

\begin{prop}\label{prop:cross}
	$\overline{C_n^\star}+M_2(n,k)=\frac{n+k}{n}\overline{C_n^\star}.$ Moreover,
	$$\#M_2(n,k)=\sum_{j=0}^n\binom{n}{j}\binom{k+j}{n}.$$
\end{prop}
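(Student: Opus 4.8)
The plan is to mimic the structure of Proposition~\ref{prop:simplex}: prove the set equality $\overline{C_n^\star}+M_2(n,k)=\frac{n+k}{n}\overline{C_n^\star}$ first, then count $\#M_2(n,k)$ separately by a combinatorial argument. For the inclusion $\subseteq$ observe that if $(y_1,\dots,y_n)\in\overline{C_n^\star}$ and $(z_1,\dots,z_n)\in M_2(n,k)$ then by the triangle inequality $\sum_i|y_i+z_i|\le\sum_i|y_i|+\sum_i|z_i|\le n+k$, so the sum lies in $\frac{n+k}{n}\overline{C_n^\star}$. The reverse inclusion is the substantive point and I would argue by induction on $k$ exactly as in the simplex case: the base case $k=0$ is trivial, and assuming $\overline{C_n^\star}+M_2(n,k-1)\supseteq\frac{n+k-1}{n}\overline{C_n^\star}$, take $(y_1,\dots,y_n)$ with $k+n-1<\sum_i|y_i|\le k+n$ (otherwise it already lies in the smaller dilate). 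Decompose each coordinate as $y_i=\mathrm{sgn}(y_i)(\,[\,|y_i|\,]+\{|y_i|\}\,)$; then $\sum_i\{|y_i|\}\le n$ forces $\sum_i[\,|y_i|\,]\ge k$, and since it is an integer $\sum_i[\,|y_i|\,]\ge k$. Now choose integers $0\le z_i\le [\,|y_i|\,]$ with $\sum_i z_i=k$ and set the lattice point to be $(\mathrm{sgn}(y_1)z_1,\dots,\mathrm{sgn}(y_n)z_n)\in M_2(n,k)$; the remainder $(y_1-\mathrm{sgn}(y_1)z_1,\dots)$ has the same sign pattern as $y$ in each coordinate, absolute values summing to at most $n$, hence lies in $\overline{C_n^\star}$. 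This proves the equality.

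For the count, I would stratify $M_2(n,k)$ by the support size $j=\#\{i:x_i\neq 0\}$. Choosing which $j$ coordinates are nonzero contributes a factor $\binom{n}{j}$. On those $j$ coordinates we need integer tuples with $x_i\neq 0$ and $\sum|x_i|\le k$; writing $x_i=\varepsilon_i w_i$ with $\varepsilon_i\in\{\pm1\}$ and $w_i\ge 1$ we get a factor $2^j$ from the signs times the number of $(w_1,\dots,w_j)\in\mathbb{Z}_{\ge1}^j$ with $\sum w_i\le k$, which by the standard stars-and-bars slack-variable trick equals $\binom{k}{j}$. So one natural expression is $\#M_2(n,k)=\sum_{j=0}^n\binom{n}{j}2^j\binom{k}{j}$. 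To reconcile this with the stated $\sum_{j=0}^n\binom{n}{j}\binom{k+j}{n}$, I would prove the binomial identity $\sum_{j}\binom{n}{j}2^j\binom{k}{j}=\sum_{j}\binom{n}{j}\binom{k+j}{n}$; this is a Vandermonde/Chu type identity and can be handled by a generating-function computation (both sides equal the coefficient of a suitable monomial, e.g.\ via $\sum_j\binom{n}{j}\binom{k+j}{n}=\sum_j\binom{n}{j}[x^n](1+x)^{k+j}=[x^n](1+x)^k(2+x)^n$, and expanding $(2+x)^n=\sum_j\binom{n}{j}2^{n-j}x^j$ then matching) or by a direct double-counting argument on lattice points of a truncated cube.

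The main obstacle I anticipate is not the geometry — the induction is essentially a sign-decorated copy of the simplex argument — but rather pinning down the counting formula in the exact shape the authors wrote it. The risk is an off-by-one in the stars-and-bars step (whether the slack variable may be zero, whether $w_i\ge 1$ or $w_i\ge 0$) and then correctly matching my clean formula $\sum_j\binom{n}{j}2^j\binom{k}{j}$ against their $\sum_j\binom{n}{j}\binom{k+j}{n}$; I would double-check both at small values such as $n=k=1$ (where $M_2$ has $3$ points) and $n=1$, general $k$ (where it has $2k+1$ points) before trusting the general identity. A secondary subtlety is making sure that in the reverse inclusion the chosen $z_i$ actually can be selected with $\sum z_i=k$ and $0\le z_i\le[\,|y_i|\,]$, i.e.\ that $\sum_i[\,|y_i|\,]\ge k$, which is exactly what the floor estimate gives — so this should go through cleanly. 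Finally I would record the immediate corollary $\gamma_{\#M_2(n,k)}(C_n^\star)\le\frac{n}{n+k}$, paralleling the corollary after Proposition~\ref{prop:simplex}, since that is what feeds into Theorem~\ref{theo:2}.
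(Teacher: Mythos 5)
Your proof of the set equality $\overline{C_n^\star}+M_2(n,k)=\tfrac{n+k}{n}\overline{C_n^\star}$ is essentially the paper's: they also reduce to the nonnegative orthant (the paper simply writes ``we may assume that $y_i\ge 0$'' by symmetry, where you track signs explicitly) and then run the simplex induction verbatim. The counting of $\#M_2(n,k)$, however, is where you genuinely diverge. You stratify by support size $j$, obtaining the clean intermediate form $\sum_{j=0}^n\binom{n}{j}2^j\binom{k}{j}$, and then reconcile it with the stated $\sum_{j=0}^n\binom{n}{j}\binom{k+j}{n}$ via the generating-function identity $[x^n](1+x)^k(2+x)^n$ read two ways (after reindexing $j\mapsto n-j$). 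The paper instead slices on the last coordinate to derive a linear recurrence, $m_2(n,k)=m_2(n-1,k)+2\sum_{j=0}^{k-1}m_2(n-1,j)$, which telescopes to the Pascal-like rule $m_2(n,k)=m_2(n-1,k-1)+m_2(n-1,k)+m_2(n,k-1)$, and then verifies the stated closed form by a double induction using $\binom{n-1}{j}+\binom{n-1}{j+1}=\binom{n}{j+1}$. Your route is more transparent combinatorially (the factor $\binom{n}{j}2^j\binom{k}{j}$ has a direct meaning, and the identity is a standard Vandermonde-type manipulation), while the paper's recurrence avoids generating functions entirely and lands on the exact shape of the target formula without an intermediate expression; both are correct and of comparable length.
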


\begin{proof}
This proof is similar with the proof of Proposition \ref{prop:simplex}. First, we have
$$\frac{n+k}{n}\overline{C_n^\star}=\{(x_1,x_2,\dots,x_n):\sum_{i=1}^n\mid x_i\mid\leq n+k,i=1,\dots,n\}.$$
On one side, it is easy to see $\overline{C_n^\star}+M_2(n,k)\subseteq\frac{n+k}{n}\overline{C_n^\star}.$
On the other side, for $k=0$, $\overline{C_n^\star}+M_2(n,0)=\overline{C_n^\star}$. By induction, we assume that
$$\overline{C_n^\star}+M_2(n,k-1)\supseteq \frac{n+k-1}{n}\overline{C_n^\star}.$$
Then for any $(y_1,y_2,\dots,y_n)\in \frac{n+k}{n}\overline{C_n^\star}$, we may assume that $y_i\geq 0$ and
$$k+n-1<y_1+y_2+ \dots +y_n\leq k+n, \eqno(2a)$$
otherwise it will be contained in $\frac{n+k-1}{n}\overline{C_n^\star}.$ Now let
$$y_i=[y_i]+\{y_i\},\eqno(2b)$$
where $[\cdot]$ is the Gauss function, and $0\leq\{y_i\}<1$ for all $i=1,2,\dots,n.$ That is,
$$k+n-1<[y_1]+\{y_1\}+[y_2]+\{y_2\}+\dots+[y_n]+\{y_n\}\leq k+n.$$
Apparently, we have $\{y_1\}+\{y_2\}+\dots+\{y_n\}\leq n,$ thus
$$k-1<[y_1]+[y_2]+\cdots+[y_n].$$
Since $[y_1]+[y_2]+\cdots+[y_n]$ is an integer, we have
$$k\leq[y_1]+[y_2]+\cdots+[y_n].$$
Consider integers $0\leq z_i\leq [y_i]$, $i=1,\cdots,n$, with $k=z_1+\cdots+z_n$. In this case,
$$(y_1,\cdots,y_n)=(z_1,\cdots,z_n)+(y_1-z_1,\cdots,y_n-z_n),$$
where $(z_1,\cdots,z_n)\in M_1(n,k)$, $y_i-z_i\geq 0$, and $\sum_{i=1}^{n}(y_i-z_i)\leq n$.

Therefore, we have
$$\frac{n+k}{n}\overline{C_n^\star}=\overline{C_n^\star}+M_2(n,k).$$

And moreover,
\begin{align*}
\#\{M_2(n,k)\}:=m_2(n,k)&=\#\{(x_1,x_2,\dots,x_n)\in \mathbb{Z}^n:\sum_{i=1}^n \mid x_i\mid\leq k\}\\
&=\sum_{j=0}^k\#\{(x_1,x_2,\dots,x_n)\in \mathbb{Z}^n:\sum_{i=1}^{n-1}\mid x_i\mid\leq k-j, \mid x_n\mid=j\}\\
&=m_2(n-1,k)+2\sum_{j=0}^{k-1}m_2(n-1,j).
\end{align*}
Therefore,
$$m_2(n,k)=m_2(n-1,k-1)+m_2(n-1,k)+m_2(n,k).$$
In order to show
$$m_2(n,k)=\sum_{j=0}^n\binom{n}{j}\binom{k+j}{n}=\sum_{-\infty}^{\infty}\binom{n}{j}\binom{k+j}{n},$$
we use induction on $n$ and $k$.

It is obvious that $m_2(1,k)=2k+1$, $m_2(2,k)=2k^2+2k+1$.
If $m_2(n-1,k)=\sum_{-\infty}^{\infty}\binom{n-1}{j}\binom{k+j}{n-1}$, and if $m_2(n,k-1)=\sum_{-\infty}^{\infty}\binom{n}{j}\binom{k-1+j}{n}$, then
\begin{align*}
m_2(n,k)&=\sum_{-\infty}^{\infty}\binom{n-1}{j}\binom{k-1+j}{n-1}+\sum_{-\infty}^{\infty}\binom{n-1}{j}\binom{k+j}{n-1}+\sum_{-\infty}^{\infty}\binom{n}{j}\binom{k-1+j}{n}\\
&=\sum_{-\infty}^{\infty}\binom{n-1}{j+1}\binom{k+j}{n-1}+\sum_{-\infty}^{\infty}\binom{n-1}{j}\binom{k+j}{n-1}+\sum_{-\infty}^{\infty}\binom{n}{j}\binom{k-1+j}{n}\\
&=\sum_{-\infty}^{\infty}\binom{n}{j+1}\binom{k+j}{n-1}+\sum_{-\infty}^{\infty}\binom{n}{j}\binom{k-1+j}{n}\\
&=\sum_{-\infty}^{\infty}\binom{n}{j}\binom{k-1+j}{n-1}+\sum_{-\infty}^{\infty}\binom{n}{j}\binom{k-1+j}{n}\\
&=\sum_{-\infty}^{\infty}\binom{n}{j}\binom{k+j}{n}.
\end{align*}
\end{proof}

\begin{coro}
$\gamma_{m_2(n,k)}(C_n^\star)\leq \frac{n}{n+k}$.
\end{coro}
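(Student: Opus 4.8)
The plan is to read the statement off Proposition~\ref{prop:cross} directly, exactly the way the corollary following Proposition~\ref{prop:simplex} is read off that proposition. Only two elementary facts are needed: that Hadwiger's covering functional $\gamma_m$ is invariant under dilations of the body, and that a covering of a dilate $\lambda K$ by translates of $K$ rescales to a covering of $K$ by translates of $\frac1\lambda K$.

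First I would record the scale-invariance: for any convex body $K$, any $\mu>0$ and any $m$ one has $\gamma_m(\mu K)=\gamma_m(K)$, since multiplying every point --- the centres $\boldsymbol u_i$ and the body alike --- by $\mu$ turns a covering $K\subseteq\bigcup_i(\lambda K+\boldsymbol u_i)$ into $\mu K\subseteq\bigcup_i(\lambda(\mu K)+\mu\boldsymbol u_i)$, and conversely. In particular, since $\overline{C_n^\star}=nC_n^\star$, it suffices to bound $\gamma_{m_2(n,k)}(\overline{C_n^\star})$.

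Next I would apply Proposition~\ref{prop:cross}, which gives
$$\frac{n+k}{n}\,\overline{C_n^\star}=\overline{C_n^\star}+M_2(n,k)=\bigcup_{\boldsymbol u\in M_2(n,k)}\bigl(\overline{C_n^\star}+\boldsymbol u\bigr),$$
that is, the dilate $\frac{n+k}{n}\overline{C_n^\star}$ is covered by exactly $\#M_2(n,k)=m_2(n,k)$ translates of $\overline{C_n^\star}$. Multiplying this identity by $\frac{n}{n+k}$ gives
$$\overline{C_n^\star}=\bigcup_{\boldsymbol u\in M_2(n,k)}\Bigl(\tfrac{n}{n+k}\,\overline{C_n^\star}+\tfrac{n}{n+k}\,\boldsymbol u\Bigr),$$
so $\overline{C_n^\star}$ is covered by $m_2(n,k)$ translates of $\frac{n}{n+k}\overline{C_n^\star}$. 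By the definition of $\gamma_m$ this yields $\gamma_{m_2(n,k)}(\overline{C_n^\star})\le\frac{n}{n+k}$, and the scale-invariance from the previous step upgrades it to $\gamma_{m_2(n,k)}(C_n^\star)\le\frac{n}{n+k}$.

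There is essentially no obstacle here: all the content lives in Proposition~\ref{prop:cross} --- both in the inductive covering argument and in the combinatorial identity for $m_2(n,k)$ --- and the corollary is merely its restatement in the language of $\gamma_m$. The one point worth a word is that $\gamma_m$ is defined through coverings of $K$ itself, so one should note that Proposition~\ref{prop:cross} delivers a covering of $\overline{C_n^\star}$ exactly, not of some strictly larger set; this is immediate because the identity there is a genuine set equality.
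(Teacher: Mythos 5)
Your argument is correct and is exactly the route the paper has in mind: the paper states the corollary without proof, treating it as an immediate consequence of Proposition~\ref{prop:cross}, and your two observations (affine invariance of $\gamma_m$ and rescaling the set identity $\overline{C_n^\star}+M_2(n,k)=\frac{n+k}{n}\overline{C_n^\star}$ by $\frac{n}{n+k}$) spell out precisely why it is immediate. Nothing to add or correct.
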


\subsection{Remark}\label{sect:remark}
For any convex body $K\in \mathcal{K},$ if $\mathcal{K}=\{K_i: K_i\quad\text{are congruent to K}\}$ is a covering of $\mathbb{E}^n$,
we define translative covering density
$$\theta(\mathcal{K})=\lim_{\ell \rightarrow \infty}\inf\frac{\vol(\mathcal{K}\cap \ell W_n)}{\vol(\ell W_n)},$$
where $W_n$ denote the $n$-dimensional unit cube. The lattice covering density of $K$ is
$$\theta^l(K)=\min_{\mathcal{K}}\{\theta(\mathcal{K}): \mathcal{K}\quad\text{is a lattice covering}\}.$$
It is mentioned by Rogers and Zong~\cite{Rogers} that when $n\geq 3$, $r\in (0,1)$ and $K$ is centrally symmetric, then
$$N(K,rK)\leq (1+r^{-1})^n(n\log{n}+\log\log{n}+5n),$$
where $N(K,rK)$ (or $N^l(K,rK)$) is the smallest number of (lattice) translations of $rK$ required to cover $K$.
Equivalently,
$$\gamma_{(1+r^{-1})^n (n\log{n}+\log\log{n}+5n)}(K)\leq r.$$
All the estimates follow immediately from two simple general results for
general convex bodies $H$ and $K$, i.e.,
$$N(K,H)\leq \frac{\vol(K-H)}{\vol(H)}\theta(H),$$
and
$$N^l(K,H)\leq \frac{\vol(K-H)}{\vol(H)}\theta^l(H),$$
where $\vol(\cdot)$ denotes the volume of a set, and $K-H$ denotes the Minkowski difference
of $K$ and $H$.
That is, they provided the upper bound via the most economical covering of $K$. Here we only use a very easy covering of $S_n$ and $C_n^\star.$
\section{Hadwiger's covering functional for the simplex}

In this section, we prove Theorem \ref{theo:1} based on Proposition \ref{prop:simplex}.

\begin{proof}[Proof of Theorem \ref{theo:1}]
Let $c_1$ be the solution of $\frac{(1+c)^{1+c}}{c^c}=2$, i.e., $c_1=0.2938\cdots$. Then by the Stirling formula,

\begin{align*}
\lim_{n\to+\infty}\left(\binom{n+[c_1n]}{n}\right)^{\frac{1}{n}}&=\lim_{n\to+\infty} \left(\frac{(n+[c_1n])!}{n![c_1n]!}\right)^{\frac{1}{n}}\\
&=\lim_{n\to+\infty} \left(\frac{(n+[c_1n])^{n+[c_1n]}\cdot \sqrt{2\pi(n+[c_1n])}}{n^n[c_1n]^{[c_1n]}\cdot\sqrt{2\pi n}\sqrt{2\pi [c_1n]}}\right)^{\frac{1}{n}}\\
&=\lim_{n\to+\infty}\left(\frac{(1+c_1)^{(1+c_1)}}{{c_1}^{c_1}}\right)=2
\end{align*}
Therefore, if we let $\binom{n+k(n)}{n}\leq 2^n< \binom{n+k(n)+1}{n}$, then we have $\lim_{n\to \infty}\frac{k(n)}{n}=c_1$.
Thus,
$$\lim_{n\to \infty}\gamma_{2^n}(S_n)\leq \frac{1}{1+c_1}.$$
\end{proof}
From the same reason, we have:
\begin{coro}
If $a(t)$ is the solution of $t=\frac{(1+x)^{(1+x)}}{x^x}$, for $t>1$, then $\lim_{n \to +\infty}\gamma_{t^n}(S_n)\leq \frac{1}{1+a(t)}.$
\end{coro}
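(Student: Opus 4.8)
The plan is to mimic exactly the argument given for Theorem~\ref{theo:1}, now keeping $t$ as a free parameter rather than fixing $t=2$. Recall that Proposition~\ref{prop:simplex} gives $\gamma_{m_1(n,k)}(S_n)\le \frac{n}{n+k}$ with $m_1(n,k)=\binom{n+k}{n}$, and this holds for \emph{every} choice of $k=k(n)$. So the only thing to verify is the asymptotic growth rate of the binomial coefficients and then choose $k(n)$ so that $\binom{n+k(n)}{n}\le t^n$ while $\frac{k(n)}{n}$ converges to the right limit.

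First I would record the Stirling estimate: for a fixed ratio $x>0$,
\[
\lim_{n\to\infty}\binom{n+\lfloor xn\rfloor}{n}^{1/n}=\frac{(1+x)^{1+x}}{x^x}.
\]
This is precisely the computation carried out in the proof of Theorem~\ref{theo:1} with $c_1$ replaced by $x$; the $\sqrt{2\pi(\cdot)}$ factors all disappear under the $n$-th root, and one is left with $\bigl((n+\lfloor xn\rfloor)^{n+\lfloor xn\rfloor}/(n^n \lfloor xn\rfloor^{\lfloor xn\rfloor})\bigr)^{1/n}\to (1+x)^{1+x}/x^x$. Call this function $f(x)=\frac{(1+x)^{1+x}}{x^x}$. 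A small lemma I would check is that $f$ is continuous and strictly increasing on $(0,\infty)$ with $f(0^+)=1$ and $f(x)\to\infty$, so that for any $t>1$ the equation $t=f(x)$ has a unique solution $x=a(t)$; this is what makes $a(t)$ well defined and is implicit in the statement. (One can see monotonicity from $\log f(x)=(1+x)\log(1+x)-x\log x$, whose derivative is $\log(1+x)-\log x=\log(1+1/x)>0$.)

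Next I would define $k(n)$ by $\binom{n+k(n)}{n}\le t^n<\binom{n+k(n)+1}{n}$, exactly as in the proof of Theorem~\ref{theo:1}. To conclude $\frac{k(n)}{n}\to a(t)$, suppose along some subsequence $\frac{k(n)}{n}\to L$ (passing to a subsequence is legitimate since $k(n)/n$ is bounded: $\binom{n+k}{n}$ already exceeds $t^n$ once $k/n$ is a bit past $a(t)$). Along that subsequence $\binom{n+k(n)}{n}^{1/n}\to f(L)$ by the Stirling estimate, while the defining inequality forces $f(L)\le t$; the companion inequality with $k(n)+1$ forces $f(L)\ge t$ (adding $1$ does not change the ratio limit). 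Hence $f(L)=t$, so $L=a(t)$ by uniqueness, and since every subsequential limit equals $a(t)$ we get $\frac{k(n)}{n}\to a(t)$. Then Proposition~\ref{prop:simplex}'s corollary gives $\gamma_{t^n}(S_n)\le\gamma_{\binom{n+k(n)}{n}}(S_n)\le\frac{n}{n+k(n)}\to\frac{1}{1+a(t)}$, which is the claim.

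The only mild subtlety — the "hard part," though it is routine — is making the subsequence argument airtight: one must be careful that the Stirling limit was stated for a fixed ratio $x$, whereas here $k(n)/n$ only converges, so strictly speaking one should either prove the Stirling estimate uniformly for $x$ in a compact subinterval of $(0,\infty)$, or (more simply) sandwich $k(n)/n$ between two constants $a(t)-\varepsilon$ and $a(t)+\varepsilon$ for large $n$ using monotonicity of $f$ and the fact that $\binom{n+\lfloor(a(t)\pm\varepsilon)n\rfloor}{n}^{1/n}\to f(a(t)\pm\varepsilon)$, which straddles $t$. Either way the estimate is elementary, and no genuinely new idea beyond the proof of Theorem~\ref{theo:1} is needed; the corollary is purely a matter of not substituting the numerical value $2$ too early.
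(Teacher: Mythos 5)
Your argument is exactly the paper's intended proof: the paper itself gives no details, merely remarking ``From the same reason, we have:'' before stating the corollary, i.e.\ it appeals to the proof of Theorem~\ref{theo:1} with $2$ replaced by $t$. You reproduce that argument faithfully and in fact supply more care than the paper does (monotonicity of $f$, well-definedness of $a(t)$, and the subsequence argument for $k(n)/n\to a(t)$), so the proposal is correct and takes the same route.
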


\section{Hadwiger's covering functional for the cross-polytope}

In this section, we prove Theorem \ref{theo:2} based on Proposition \ref{prop:cross}.

\begin{proof}[Proof of Theorem \ref{theo:2}]
We observe that $M_2(n,k)=M_2(k,n).$ Since
$$2^k\cdot \binom{n}{k}\leq \sum_{j=0}^k\binom{n}{j}\binom{k+j}{n}\leq 2^k\cdot \binom{n+k}{k},$$
we let $c_3$ be the solution of $\frac{1}{2^{1-c}}\cdot \frac{(1+c)^{(1+c)}}{c^c}=1$, i.e., $c_3=0.2056\cdots$, and let $c_4$ be the solution of $\frac{1}{2^{1-c}}\cdot \frac{1}{(c^c)\cdot (1+c)^{(1+c)}}=1$, i.e., $c_4=0.2271\cdots.$
Then by the Stirling formula,
\begin{align*}
\lim_{n\to +\infty}\left(2^{[c_3n]}\cdot\binom{n+[c_3n]}{[c_3n]}\right)^{\frac{1}{n}}&=\lim_{n\to +\infty}\left(2^{[c_3n]}\cdot\frac{(n+[c_3n])!}{n![c_3n]!} \right)^{\frac{1}{n}}\\
&=\lim_{n\to +\infty}\left(2^{[c_3n]}\frac{(n+[c_3n])^{n+[c_3n]}\cdot \sqrt{2\pi(n+[c_3n])}}{n^n[c_3n]^{[c_3n]}\cdot\sqrt{2\pi n}\sqrt{2\pi [c_3n]}}\right)^{\frac{1}{n}}\\
&=\lim_{n\to+\infty}\left(2^{c_3}\frac{(1+c_3)^{(1+c_3)}}{{c_3}^{c_3}}\right)=2
\end{align*}

\begin{align*}
\lim_{n\to +\infty}\left(2^{[c_4n]}\cdot\binom{n}{[c_3n]}\right)^{\frac{1}{n}}&=\lim_{n\to +\infty}\left(2^{[c_4n]}\cdot\frac{(n)!}{(n-[c_4n])![c_4n]!} \right)^{\frac{1}{n}}\\
&=\lim_{n\to +\infty}\left(2^{[c_4n]}\frac{n^n\cdot \sqrt{2\pi n}}{(n-[c_4n])^{(n-[c_4n])}[c_4n]^{[c_4n]}\cdot\sqrt{2\pi (n-[c_4n])}\sqrt{2\pi [c_4n]}}\right)^{\frac{1}{n}}\\
&=\lim_{n\to+\infty}\left(2^{c_4}\frac{1}{({c_4}^{c_4})\cdot(1+c_4)^{(1+c_4)}}\right)=2
\end{align*}
Therefore, let $2^{k_1(n)}\binom{n+k_1(n)}{k_1(n)}\leq 2^n<2^{k_1(n)+1}\binom{n+k_1(n)+1}{k_1(n)+1}$ and $2^{k_2(n)}\binom{n}{k_2(n)}\leq 2^n<2^{k_2(n)+1}\binom{n}{k_2(n)+1}$, we have $\lim_{n\to +\infty}\frac{k_1(n)}{n}=c_3$ and $\lim_{n\to +\infty}\frac{k_2(n)}{n}=c_4$. Thus, there exists $c_3\leq c_2\leq c_4$, such that
$$\lim_{n\to +\infty}\gamma_{2^n}(C_n^\star)\leq \frac{1}{1+c_2}.$$

\end{proof}

\section{From the simplex to the quarter-$l_p$ ball}
For convenience, we let $\overline{K_n^{p*}}=\{(x_1,x_2,\dots,x_n): \sum_{i=1}^n x_i^p\leq n,x_i\geq0,i=1,\dots,n\}$ be a normalized quarter-$l_p$ ball,
Similar with the easiest covering of the simplex, we construct a finite covering of the quarter-$l_p$ ball.
\begin{prop}\label{prop:simplex2}
For each $p\geq1$, $\gamma_{\binom{n+k}{n}}(\overline{K_n^{p*}})\leq \left(\frac{n}{n+k}\right)^{1/p}.$
\end{prop}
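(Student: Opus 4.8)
\textbf{Proof proposal for Proposition~\ref{prop:simplex2}.}
The plan is to mimic the lattice-covering argument of Proposition~\ref{prop:simplex}, replacing the normalized simplex $\overline{S_n}$ by the normalized quarter-$l_p$ ball $\overline{K_n^{p*}}$ and keeping the \emph{same} lattice translate set $M_1(n,k)=\{(x_1,\dots,x_n)\in\mathbb{Z}^n:\sum x_i\le k,\ x_i\ge0\}$. First I would dilate: since $\overline{K_n^{p*}}=\{x\ge0:\sum x_i^p\le n\}$, the body $\left(\frac{n+k}{n}\right)^{1/p}\overline{K_n^{p*}}$ is exactly $\{x\ge0:\sum x_i^p\le n+k\}$, so it suffices to show
$$\left\{x\in\mathbb{E}^n: x_i\ge0,\ \textstyle\sum_{i=1}^n x_i^p\le n+k\right\}\ \subseteq\ \overline{K_n^{p*}}+M_1(n,k).$$
The inclusion $\overline{K_n^{p*}}+M_1(n,k)\subseteq\left(\frac{n+k}{n}\right)^{1/p}\overline{K_n^{p*}}$ is the easy direction: for $y=s+z$ with $s\in\overline{K_n^{p*}}$ and $z\in M_1(n,k)$ one must check $\sum(s_i+z_i)^p\le n+k$; since $z_i$ are nonnegative integers with $\sum z_i\le k$, and each $s_i\le n^{1/p}$, this follows from the elementary bound $(s_i+z_i)^p\le s_i^p+z_i$ whenever $s_i^p\le n$ — wait, that needs care, so instead I would argue by convexity/monotonicity that the maximum of $\sum(s_i+z_i)^p$ over the relevant region is attained and bounded by $n+k$; this is a routine but not entirely trivial estimate and I flag it below.

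For the main (hard) direction I would run the same induction on $k$ used in Proposition~\ref{prop:simplex}. For $k=0$ there is nothing to prove. Assume $\{x\ge0:\sum x_i^p\le n+k-1\}\subseteq\overline{K_n^{p*}}+M_1(n,k-1)$. Given $y\ge0$ with $n+k-1<\sum y_i^p\le n+k$ (the complementary case being handled by induction), write $y_i=[y_i]+\{y_i\}$ with $0\le\{y_i\}<1$. The key number-theoretic step is to produce integers $z_i$ with $0\le z_i\le[y_i]$, $\sum z_i=k$, and crucially $\sum (y_i-z_i)^p\le n$, so that $y=z+(y-z)$ with $z\in M_1(n,k)$ and $y-z\in\overline{K_n^{p*}}$. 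To see that such a $z$ exists, note first that $\sum[y_i]\ge k$: indeed from $\sum y_i^p>n+k-1$ and $\sum\{y_i\}^p\le\sum\{y_i\}<n$ (or a sharper convexity bound), together with the inequality $y_i^p\le ([y_i]+\{y_i\})^p$ and a term-by-term comparison showing $\sum([y_i]^p+\text{cross terms})$ cannot be too small, one deduces $\sum[y_i]\ge k$; then decreasing coordinates one unit at a time from $y$ down to $y-z$ only \emph{decreases} $\sum(\cdot)^p$, and one stops exactly when the running sum of decrements reaches $k$, at which point $\sum(y_i-z_i)^p\le\sum\lceil\text{something}\rceil^p\le n$ because we have removed total "mass" $k$ from a body of size $n+k$. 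Combining both inclusions gives $\left(\frac{n+k}{n}\right)^{1/p}\overline{K_n^{p*}}=\overline{K_n^{p*}}+M_1(n,k)$, and since $\#M_1(n,k)=\binom{n+k}{n}$ by Proposition~\ref{prop:simplex}, rescaling back to $K_n^{p*}$ yields $\gamma_{\binom{n+k}{n}}(\overline{K_n^{p*}})\le\left(\frac{n}{n+k}\right)^{1/p}$, which is the claim.

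The main obstacle, and the place where the $p=1$ proof does \emph{not} transfer verbatim, is precisely the passage from the linear constraint $\sum x_i\le k$ to the $l_p$ constraint $\sum x_i^p\le n+k$ when peeling off integer parts: for $p=1$ one has the exact identity $\sum y_i=\sum[y_i]+\sum\{y_i\}$, whereas for $p>1$ one only has subadditivity-type inequalities, so the bookkeeping that guarantees both $\sum z_i=k$ and $\sum(y_i-z_i)^p\le n$ simultaneously must be done with an explicit greedy/rounding scheme and an inequality of the form $(a+b)^p-a^p\ge b$ for $a\ge0$, $0\le b\le 1$ (true since the left side is at least $p a^{p-1}b\cdot$ something, or more simply since $t\mapsto(a+t)^p$ has derivative $\ge1$ once $a+t\ge1$, and small leftover cases handled separately). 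I would therefore isolate a short lemma stating: \emph{if $y\ge0$ and $\sum y_i^p\le n+k$ with $k\in\mathbb{Z}_{\ge0}$, then there is $z\in M_1(n,k)$ with $0\le z\le y$ coordinatewise and $\sum(y_i-z_i)^p\le n$}, prove it by the greedy argument just described, and then the rest of the proposition is immediate. Everything else — the dilation identity, the easy inclusion, the cardinality count — is bookkeeping already present in Section~2.
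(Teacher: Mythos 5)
Your core idea is correct and is essentially the paper's argument in different clothes: both hinge on the single inequality $t^p-(t-1)^p\ge 1$ for $t\ge 1$, $p\ge 1$, which guarantees that subtracting one integer unit from a coordinate $\ge 1$ reduces $\sum y_i^p$ by at least $1$. You apply this greedily; the paper instead peels one unit at a time and packages the bookkeeping into an auxiliary sequence $t_{n,p,k}$ defined by $(t_{n,p,k+1}-1)^p+(n-1)t_{n,p,k+1}^p=nt_{n,p,k}^p$, then shows $t_{n,p,k}^p\ge \frac{n+k}{n}$ using that same inequality. Your greedy lemma ("given $y\ge 0$ with $\sum y_i^p\le n+k$, there is $z\in M_1(n,k)$ with $0\le z\le y$ and $\sum (y_i-z_i)^p\le n$") is valid and arguably a cleaner way to organize the same estimate; the paper's version has the advantage of exhibiting the exact recursion for the covering ratio.

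However, there are two points where your write-up goes astray, neither fatal but both worth flagging. First, the inclusion you call the "easy direction," $\overline{K_n^{p*}}+M_1(n,k)\subseteq\bigl(\tfrac{n+k}{n}\bigr)^{1/p}\overline{K_n^{p*}}$, is actually \emph{false} for $p>1$ (already $n=k=1$, $p=2$, $s=1$, $z=1$ gives $(s+z)^2=4>2$), so the asserted equality $\bigl(\tfrac{n+k}{n}\bigr)^{1/p}\overline{K_n^{p*}}=\overline{K_n^{p*}}+M_1(n,k)$ does not hold. This costs you nothing: the proposition only needs $\supseteq$, i.e. the hard direction, and the paper accordingly proves only containment. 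You should simply drop the equality claim and the attempted verification of the reverse inclusion. Second, the inequality $(a+b)^p-a^p\ge b$ does \emph{not} hold for all $a\ge 0$, $0\le b\le 1$ (take $a=0$, $b=1/2$, $p=2$); it holds for $a\ge 1$, equivalently $t^p-(t-1)^p\ge 1$ for $t\ge 1$, which is exactly what your derivative remark "$\,\frac{d}{dt}(a+t)^p\ge 1$ once $a+t\ge 1$" correctly captures — so cite that form. Also, your preliminary worry about proving $\sum[y_i]\ge k$ via convexity is unnecessary: the greedy does not need to perform exactly $k$ subtractions, since if at some step all coordinates drop below $1$ then $\sum y_i^p<n$ already and one stops early with $\sum z_i<k$, which is still an admissible element of $M_1(n,k)$. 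With these repairs your proof is complete and matches the paper's in substance.
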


\begin{proof}
Similar with the easiest  covering of the simplex, we consider the covering $\overline{K_n^{p*}}+M_1(n,k)$ where $\#M_1(n,k)=\binom{n+k}{n}$ (or $=\binom{n+k}{k})$.

{\bf Claim:} $\overline{K_n^{p*}}+M_1(n,k)\supseteq t_{n,p,k}\overline{K_n^{p*}}$, where $t_{n,p,k}$ is inductively defined by $(t_{n,p,k+1}-1)^p+(n-1)t_{n,p,k+1}^p=nt_{n,p,k}^p.$

{\bf Proof of the Claim:} For $k=0$, $t_{n,p,0}=1$, i.e., $\overline{K_n^{p*}}+M_1(n,0)\supseteq t_{n,p,0}\overline{K_n^{p*}}$.

By induction, we assume that
$$\overline{K_n^{p*}}+M_1(n,k)\supseteq t_{n,p,k}\overline{K_n^{p*}},$$
then
\begin{align*}
\overline{K_n^{p*}}+M_1(n,k+1)&=\overline{K_n^{p*}}+M_1(n,k)+M_1(n,1)\\
&\supseteq t_{n,p,k}\overline{K_n^{p*}}+M_1(n,1).
\end{align*}
For each $x=(x_1,x_2,\dots, x_n)$ satisfying $x_i\geq 0$ and $\sum_{i=1}^nx_i^p=nt_{n,p,k+1}^p$, i.e., $x\in t_{n,p,k+1}\overline{K_n^{p*}},$
without loss of generality, we may assume that $x_1\geq x_2\geq \cdots\geq x_n.$ Apparently, $x_1\geq t_{n,p,k+1}\geq 1$. Due to the monotonicity of $(t-1)^p-t^p$, we have
\begin{align*}
(x_1-1)^p+\cdots+x_n^p&=x_1^p+\cdots+x_n^p+(x_1-1)^p-x_1^p\\
&\leq nt_{n,p,k+1}^p+\left(t_{n,p,k+1}-1\right)^p-t_{n,p,k+1}^p\\
&=\left(t_{n,p,k+1}-1\right)^p+(n-1)t_{n,p,k+1}^p\\
&=nt_{n,p,k}^p.
\end{align*}
Therefore, $x\in t_{n,p,k}^p\overline{K_n^{p*}}+(1,0,\dots,0)\subset t_{n,p,k}^p\overline{K_n^{p*}}+M_1(n,1)$. Thus,
\begin{align*}
\overline{K_n^{p*}}+M_1(n,k+1)&\supseteq t_{n,p,k}\overline{K_n^{p*}}+M_1(n,1)\\
&\supseteq t_{n,p,k+1}\overline{K_n^{p*}}.
\end{align*}

Now we continue. Since $t^p-(t-1)^p$ is increasing in $(1, +\infty)$, we have
\begin{align*}
t_{n,p,k+1}^p-t_{n,p,k}^p&=\frac{1}{n}\left(t_{n,p,k+1}^p-\left(t_{n,p,k+1}-1\right)^p\right)\\
&\geq \frac{1}{n}.
\end{align*}
Combining with $t_{n,p,1}^p-1\geq \frac{1}{n}$, we have
$$t_{n,p,k}^p\geq \frac{n+k}{n},$$
That is, $\overline{K_n^{p*}}+M_1(n,k)\supseteq \left(\frac{n+k}{n}\right)^{\frac{1}{p}}\overline{K_n^{p*}}.$

\end{proof}

\begin{proof}[Proof of Theorem \ref{theo:3}]
Similar with the Proof of Theorem \ref{theo:1}, we let $c_1$ be the solution of $\frac{(1+c)^{1+c}}{c^c}=2$, i.e., $c_1=0.2938\cdots$.
Then by the Stirling formula, we have
$$\lim_{n\to +\infty}\binom{n+[c_1n]}{n}^{\frac{1}{n}}=\lim_{n\to+\infty}\left(\frac{(1+c_1)^{(1+c_1)}}{{c_1}^{c_1}}\right)=2.$$
Therefore, let $\binom{n+k(n)}{n}\leq 2^n< \binom{n+k(n)+1}{n}$. Then we have $\lim_{n\to \infty}\frac{k(n)}{n}=c_1$.
Thus,
$$\lim_{n \to \infty}\gamma_{2^n}(K_n^{p*})\leq \left(\frac{1}{1+c_1}\right)^{\frac{1}{p}}.$$
\end{proof}

\section{From the cross-polytope to the $l_p$ ball}

Similar with the easiest covering of the cross-polytope, we construct a finite covering of the $l_p$ ball.

\begin{prop}\label{prop:cross2}
For each $p\geq1$, $\gamma_{\sum_{j=0}^n\binom{n}{j}\binom{k+j}{n}}K_n^{p}\leq \left(\frac{n}{n+k}\right)^{1/p}.$
\end{prop}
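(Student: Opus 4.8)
\textbf{Proof plan for Proposition~\ref{prop:cross2}.}
The plan is to mimic the proof of Proposition~\ref{prop:simplex2}, replacing the role of the simplex by the cross-polytope and the role of $M_1(n,k)$ by $M_2(n,k)$, whose cardinality $\sum_{j=0}^n\binom{n}{j}\binom{k+j}{n}$ was computed in Proposition~\ref{prop:cross}. Concretely, I would work with the normalized body $\overline{K_n^p}=\{(x_1,\dots,x_n):\sum_{i=1}^n|x_i|^p\le n\}$ and show that the lattice covering $\overline{K_n^p}+M_2(n,k)$ already contains a dilate $(\tfrac{n+k}{n})^{1/p}\,\overline{K_n^p}$; rescaling then yields $\gamma_{m_2(n,k)}(K_n^p)\le(\tfrac{n}{n+k})^{1/p}$.

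First I would set up the induction on $k$. For $k=0$ the statement is trivial. Assuming $\overline{K_n^p}+M_2(n,k)\supseteq t_{n,p,k}\,\overline{K_n^p}$ with $t_{n,p,0}=1$ and $(t_{n,p,k+1}-1)^p+(n-1)\,t_{n,p,k+1}^p=n\,t_{n,p,k}^p$ (the same recursion as in Proposition~\ref{prop:simplex2}), I would use the decomposition $M_2(n,k+1)=M_2(n,k)+M_2(n,1)$ — which is immediate since $M_2(n,1)$ contains $\pm e_i$ for all $i$ together with the origin, and any point of $M_2(n,k+1)$ can have one coordinate shifted by one unit toward $0$ to land in $M_2(n,k)$. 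Then, given $x$ on the boundary of $t_{n,p,k+1}\,\overline{K_n^p}$, I would reduce to the positive orthant by replacing each $x_i$ by $|x_i|$ (the cross-polytope is symmetric under sign changes, and each $\pm e_i$ lies in $M_2(n,1)$), order the coordinates so $|x_1|\ge\cdots\ge|x_n|$, note $|x_1|\ge t_{n,p,k+1}\ge 1$, and subtract $1$ from the largest coordinate. The monotonicity of $t\mapsto t^p-(t-1)^p$ on $[1,\infty)$ gives exactly the same chain of inequalities as in Proposition~\ref{prop:simplex2}, placing $x-\mathrm{sgn}(x_1)e_1$ inside $t_{n,p,k}\,\overline{K_n^p}$, hence $x\in t_{n,p,k}\,\overline{K_n^p}+M_2(n,1)\subseteq t_{n,p,k+1}\,\overline{K_n^p}$ is covered; this closes the induction. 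Finally, from $t_{n,p,k+1}^p-t_{n,p,k}^p=\tfrac1n\bigl(t_{n,p,k+1}^p-(t_{n,p,k+1}-1)^p\bigr)\ge\tfrac1n$ together with $t_{n,p,1}^p-1\ge\tfrac1n$, I would telescope to get $t_{n,p,k}^p\ge\tfrac{n+k}{n}$, which is the claim.

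The main point to be careful about — rather than a deep obstacle — is the decomposition $M_2(n,k+1)=M_2(n,k)+M_2(n,1)$ and, hand in hand with it, the reduction to the positive orthant: one must check that shifting a single coordinate by one unit toward the origin both stays in the integer lattice $M_2$ and does not increase the $\ell_1$-norm, so that the remainder genuinely lies in $M_2(n,k)$ while the subtracted vector lies in $M_2(n,1)$. Once this bookkeeping is in place, the metric estimate is literally the computation already carried out for $\overline{K_n^{p*}}$, since after passing to $|x_i|$ the cross-polytope behaves exactly like the quarter-$\ell_p$ ball with the same recursion $t_{n,p,k}$. I would then remark, exactly as for Theorems~\ref{theo:1}--\ref{theo:3}, that combining Proposition~\ref{prop:cross2} with the estimates $2^k\binom{n}{k}\le m_2(n,k)\le 2^k\binom{n+k}{k}$ and the Stirling asymptotics already established in the proof of Theorem~\ref{theo:2} yields Theorem~\ref{theo:4}: choosing $k_1(n),k_2(n)$ with the appropriate sandwiching by $2^n$ gives $\lim_{n\to\infty}k_1(n)/n=c_3$ and $\lim_{n\to\infty}k_2(n)/n=c_4$, whence $\lim_{n\to\infty}\gamma_{2^n}(K_n^p)\le(\tfrac{1}{1+c_2})^{1/p}$ for some $c_3\le c_2\le c_4$.
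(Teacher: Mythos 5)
Your proposal is correct and takes essentially the same approach as the paper: the paper's proof of Proposition~\ref{prop:cross2} is a one-line deferral to ``similar with the proof of Proposition~\ref{prop:simplex2},'' and you fill in exactly the details that deferral elides — the decomposition $M_2(n,k+1)=M_2(n,k)+M_2(n,1)$, the reduction to the positive orthant via sign symmetry, the same recursion $t_{n,p,k}$, and the telescoping bound $t_{n,p,k}^p\ge\tfrac{n+k}{n}$. One small slip in the writeup: in the induction step you write ``$x\in t_{n,p,k}\overline{K_n^p}+M_2(n,1)\subseteq t_{n,p,k+1}\overline{K_n^p}$,'' but the inclusion you actually want (and have proved) runs the other way, namely $t_{n,p,k+1}\overline{K_n^p}\subseteq t_{n,p,k}\overline{K_n^p}+M_2(n,1)\subseteq\overline{K_n^p}+M_2(n,k+1)$; the substance of the argument is right, so this is only a transcription error.
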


\begin{proof}
Similar with the crosspolytope, we consider the covering $K_n^{p}+M_2(n,k)$ where $\#M_2(n,k)=\sum_{j=0}^k\binom{n}{j}\binom{k+j}{n}$. Similar with the proof of Proposition \ref{prop:simplex2}, we have
$$\overline{K_n^p}+M_2(n,k)\supseteq \left(\frac{n+k}{n}\right)^{1/p}\overline{K_n^p}.$$
This implies that $\gamma_{\sum_{j=0}^n\binom{n}{j}\binom{k+j}{n}}K_n^{p}\leq \left(\frac{n}{n+k}\right)^{1/p}.$
\end{proof}

\begin{proof}[Proof of Theorem \ref{theo:4}]
Similar with the proof of Theorem \ref{theo:2}, let $c_3$ be the solution of $\frac{1}{2^{1-c}}\cdot \frac{(1+c)^{(1+c)}}{c^c}=1$, and let $c_4$ be the solution of $\frac{1}{2^{1-c}}\cdot \frac{1}{(c^c)\cdot (1+c)^{(1+c)}}=1.$
Then for the same reason, by the Stirling formula, we have
$$\lim_{n\to +\infty}\left(2^{[c_3n]}\cdot\binom{n+[c_3n]}{[c_3n]}\right)^{\frac{1}{n}}=\lim_{n\to+\infty}\left(2^{c_3}\frac{(1+c_3)^{(1+c_3)}}{{c_3}^{c_3}}\right)=2,$$
$$\lim_{n\to +\infty}\left(2^{[c_4n]}\cdot\binom{n}{[c_3n]}\right)^{\frac{1}{n}}=\lim_{n\to+\infty}\left(2^{c_3}\frac{(1+c_3)^{(1+c_3)}}{{c_3}^{c_3}}\right)=2.$$
Therefore, let $2^{k_1(n)}\binom{n+k_1(n)}{k_1(n)}\leq 2^n<2^{k_1(n)+1}\binom{n+k_1(n)+1}{k_1(n)+1}$ and $2^{k_2(n)}\binom{n}{k_2(n)}\leq 2^n<2^{k_2(n)+1}\binom{n}{k_2(n)+1}$. Then we have $\lim_{n\to +\infty}\frac{k_1(n)}{n}=c_3$, $\lim_{n\to +\infty}\frac{k_2(n)}{n}=c_4$. Thus, there exists $c_3\leq c_2\leq c_4$, such that
$$\lim_{n\to +\infty}\gamma_{2^n}(K_n^p)=\left(\frac{1}{1+c_2}\right)^{\frac{1}{p}}.$$

\end{proof}

\begin{rem}
For $p=2$, $K_n^2$ is the unit ball of $\mathbb{E}^n$, and we have $\lim_{n\to +\infty}\gamma_{2^n}(K_n^2)\leq\left(\frac{1}{1+c_2}\right)^{\frac{1}{2}},$ where $0.2056\cdots\leq c_2\leq 0.2271\cdots.$
\end{rem}

\section*{Acknowledgment}

The second author and the third author are supported by the National Nature Science Foundation of China
(NSFC 11921001) and the National Key Research and Development Program of China (2018YFA0704701).


\begin{thebibliography}{0}
\bibitem{Artstein} {\sc S.Artstein-Avidan, B.A.Slomka}, {\it On weighted covering numbers and the Levi-Hadwiger conjecture}, Israel J. Math., {\bf 209(1)}, (2015), 125--155.
\bibitem{Bezdek} {\sc K.Bezdek}, {\it The illumination conjecture and its extensions}, Period. Math. Hungar., {\bf 53(1)}, (2006), 59--69.
\bibitem{Boltyanski1} {\sc V.G.Boltyanski, H.Martini}, {\it Illumination of direct vector sums of convex bodies}, Studia Sci. Math. Hungar., {\bf 44(3)}, (2007), 367--376.
\bibitem{Boltyanski2} {\sc V.G.Boltyanski, H.Martini, P.S.Soltan}, {\it Excursions into Combinatorial Geometry}, Berlin: Springer-Verlag, 1997.
\bibitem{P.D.V} {\sc K.B{\"o}r{\"o}czky. Jr., G.Wintsche}, {\it Covering the sphere by equal spherical balls}, In: The Goodman-Pollack Festschrift.
Berlin: Springer-Verlag, {\bf 25}, (2003), 235--251.
\bibitem{Brass} {\sc P.Brass, W.Moser, J.Pach}, {\it Research Problems in Discrete Geometry}, New York: Springer-Verlag, 2005.
\bibitem{Gohberg} {\sc I.Gohberg, A.Markus}, {\it A problem on covering of convex figures by similar figures}, (in Russian),
Izv. Mold. Fil. Akad. Nauk. SSSR., {\bf 10}, (1960), 87--90.
\bibitem{Hadwiger} {\sc H.Hadwiger}, {\it Ungel{\"o}ste Probleme No. 20}, Elem. Math., {\bf 12}, (1957), 121.
\bibitem{Huang} {\sc H.Han, A.S.Boaz, T.Tomasz, V.Beatrice-Helen}, {\it Improved bounds for hadwiger’s covering problem via thin-shell estimates}, https://arxiv.org/pdf/1811.12548.
\bibitem{Chan} {\sc C.He, H.Martini, S.Wu}, {\it On covering functionals of convex bodies}, J. Math. Anal. Appl., {\bf 437(2)}, (2016), 1236--1256.
\bibitem{Krotoszynsk} {\sc S.Krotoszynski}, {\it Covering a plane convex body with five smaller homothetical copies}, Beitr{\"a}ge Algebra Geom., {\bf 25}, (1987), 171--176.
\bibitem{Lassak1} {\sc M.Lassak}, {\it Solution of Hadwiger’s covering problem for centrally symmetric convex bodies in $E^3$}, J. London Math. Soc. (2), {\bf 30(3)}, (1984), 501--511.
\bibitem{Lassak2} {\sc M.Lassak}, {\it Covering a plane convex body by four homothetical copies with the smallest positive ratio}, Geom. Dedicata, {\bf 21(2)}, (1986), 157--167.
\bibitem{Lassak3} {\sc M.Lassak}, {\it Covering a three-dimensional convex body by smaller homothetic copies}, Beitr{\"a}ge Algebra Geom., {\bf 39(2)}, (1998), 259--262.
\bibitem{Levi} {\sc F.W.Levi}, {\it Ein geometrisches {\"U}berdeckungsproblem}, Arch. Math., {\bf 5}, (1954), 476--478.
\bibitem{Lian} {\sc Y.Lian, Y.Zhang}, {\it Covering the crosspolytope with its smaller homothetic copies}, https://arxiv.org/abs/2103.10004.
\bibitem{Rogers} {\sc C.A.Rogers, C.Zong}, {\it Covering convex bodies by translates of convex bodies}, Mathematika, {\bf 44(1)}, (1997), 215--218.
\bibitem{Wu1} {\sc S.Wu, C.He}, {\it Covering functionals of convex polytopes}, Linear Algebra and its Applications, {\bf 577(15)}, (2019), 53--68.
\bibitem{Wu2} {\sc S.Wu, K.Xu}, {\it Covering functionals of cones and double cones}, Journal of Inequalities and Applications, {\bf 186}, (2018).
\bibitem{Xue} {\sc F.Xue, C.Zong}, {\it On Lattice Coverings by Simplices}, Adv. Geom., {\bf 18(2)}, (2015), 181--186.
\bibitem{Yu1} {\sc L.Yu, C.Zong}, {\it On the blocking number and the covering number of a convex body}, Adv. Geom., {\bf 9(1)}, (2009), 13--29.
\bibitem{Yu2} {\sc L.Yu}, {\it Blocking numbers and fixing numbers of convex bodies}, Discrete Mathematics, {\bf 309(23-24)}, (2009), 6544--6554.
\bibitem{Zong1} {\sc C.Zong}, {\it Some remarks concerning kissing numbers, blocking numbers and covering numbers}, Period. Math. Hungar.,
{\bf 30(3)}, (1995), 233--238.
\bibitem{Zong2} {\sc C.Zong}, {\it The kissing number, blocking number and covering number of a convex body}, Contemp. Math., {\bf 453}, (2008), 529--548.
\bibitem {Zong3} {\sc C.Zong}, {\it A quantitative program for Hadwiger’s covering conjecture}, Sci. China Math., {\bf 53(9)}, (2010), 2551--2560.
\bibitem{Zong4} {\sc C.Zong}, {\it Borsuk's partition conjecture}, Japan. J. Math., 2021. DOI: 10.1007/s11537-021-2007-7.
\end{thebibliography}
\end{document}